\newtheorem{theorem}{Theorem}[section]
\newtheorem{corollary}[theorem]{Corollary}
\newtheorem{lemma}[theorem]{Lemma}
\theoremstyle{definition}
\newtheorem{definition}[theorem]{Definition}
\newtheorem{example}[theorem]{Example}
\newtheorem{remark}[theorem]{Remark}
\newtheorem{remarks}[theorem]{Remarks}
\newcommand{\defn}[1]{\emph{#1}}
\newcommand{\ie}{i.e.\ }
\newcommand{\Z}{\mathbb{Z}}
\newcommand{\R}{\mathbb{R}}
\newcommand{\C}{\mathbb{C}}
\newcommand{\HH}{\mathbbm{H}}
\newcommand{\PP}{\mathbb{P}}
\newcommand{\cat}[1]{\mathcal{#1}}
\renewcommand{\mod}[1]{{#1}\mathrm{-mod}}
\newcommand{\alg}[1]{\mathrm{#1}}
\newcommand{\der}[2]{\cat{D}^{#1}({#2})}
\newcommand{\id}{\mathrm{id}}
\newcommand{\Mor}[3]{{\mathrm{Hom}}_{#1}\!\left(#2,#3\right)}
\newcommand{\Ext}[4]{{\mathrm{Ext}_{#1}^{#2}}\!\left(#3,#4\right)}
\newcommand{\End}[2]{\mathrm{End}_{#1}\!\left(#2\right)}
\newcommand{\Filt}[1]{\mathrm{Filt}\!\left(#1\right)}
\newcommand{\extcl}[1]{\Filt{#1}}
\newcommand{\length}[1]{\ell(#1)}
\newcommand{\epic}{\twoheadrightarrow}
\newcommand{\monic}{\hookrightarrow}
\newcommand{\stan}[1]{{\Delta_{#1}}}
\newcommand{\costan}[1]{{\nabla_{\! #1}}}
\newcommand{\pstan}[1]{{\underline{\Delta}_{#1}}}
\newcommand{\kvect}{\mathrm{Vect}_{k}}
\newcommand{\gldim}[1]{\mathrm{gldim}(#1)}
\renewcommand{\phi}{\varphi}
\title[Highest weight categories and stability conditions]{Highest weight categories \\ and stability conditions}
\author{Alessio Cipriani and Jon Woolf}
\address{Dipartimento di Informatica - Settore di Matematica, Universit\`a degli Studi di Verona, Strada le Grazie 15 - Ca' Vignal, I-37134 Verona, Italy}
\email{alessio.cipriani@univr.it}
\address{Jon Woolf, Dept. of Mathematical Sciences, University of Liverpool, L69 7ZL, U.K.}
\email{jonwoolf@liverpool.ac.uk}
\begin{document}

\begin{abstract}
Highest weight categories are an abstraction of the representation theory of semisimple Lie algebras introduced by Cline, Parshall and Scott in the late 1980s.  There are by now many characterisations of when an abelian category is highest weight, but most are hard to verify in practice. We present two new criteria --- one numerical in terms of the Grothendieck group, and one in terms of Bridegland stability conditions --- which are easier to verify. The stability criterion naturally  generalises to a characterisation of properly stratified categories. The numerical criterion implies a criterion of Green and Schroll  for when modules over a monomial algebra are highest weight.
\end{abstract}

\subjclass  {16G10 (primary), 16G20 (secondary)}
\keywords{Highest-weight category, Bridgeland stability conditions}

\maketitle

\section{Introduction}

In their seminal paper \cite{Cline1988}, Cline, Parshall and Scott observed that there are structural resemblances between certain module categories and categories of perverse sheaves, and moreover that these can be explained in terms of the properties of `standard objects', which play a similar role to Verma modules in Lie theory. These standard objects, and dual costandard ones, are defined whenever we choose an ordering of the simple objects of a Deligne-finite category, \ie a category equivalent to finitely-generated modules over a finite-dimensional $k$-algebra. They are in bijection with the simple, and therefore also the indecomposable projective, objects. In a highest-weight category each indecomposable projective has a `good' filtration by `larger' standard objects; Bernstein--Gelfand--Gelfand reciprocity relates the multiplicities in this to those in the Jordan--H\"older filtrations of costandard objects.

Any Deligne-finite category $\cat{A}$ is equivalent to the category of finitely-generated modules over a finite-dimensional algebra. Cline, Parshall and Scott \cite[Thm 3.6]{Cline1988} showed that $\cat{A}$ is a highest-weight category precisely when this algebra is quasi-hereditary. They also showed that quasi-hereditary algebras have finite global dimension; Dlab and Ringel \cite{MR0987824} showed the converse is false, but that every algebra of global dimension at most $2$ is quasi-hereditary. 

The structural commonality between certain categories of perverse sheaves and modules over a quasi-hereditary algebra identified by Cline, Parshall and Scott is the existence of a particularly simple iterated recollement. The central example is the category of perverse sheaves on a complex variety with a stratification by affine linear spaces; this is highest-weight by Mirollo--Vilonen \cite[Thm 1.3]{MR0925719} or Beilinson--Ginzburg--Soergel \cite[Thm 3.3.1]{MR1322847}. Here, the highest-weight structure is intimately related to mixed geometry, Koszul duality and Kazhdan--Lusztig theory \cite{MR1322847,MR1245719}. The distinguishing features of the iterated recollement of a highest-weight category are first that each factor is the category of vector spaces, and second that the recollement is homological, equivalently extends to a recollement of the bounded derived category --- see Krause \cite{MR3742477}. 

The standard objects in a highest-weight category form a full exceptional sequence in its bounded derived category. In fact, Krause \cite[Thm 5.2]{MR3742477} shows they are {\em strictly} full, \ie their extension closure in $\cat{A}$ is an exact category derived equivalent to $\cat{A}$. This exact category was studied earlier by Dlab and Ringel \cite{MR1211481}. More recently, Bodzenta and Bondal \cite[Thm 5.8]{MR4780036} characterised the exact categories which arise in this way, and described how highest-weight categories can be viewed as their abelian envelopes.

In the next section, we briefly review the definition and properties of highest-weight categories, and their characterisations via recollements, exceptional sequences, exact categories and quasi-hereditary algebras. In section \ref{sec: stability functions} we recall the notion of a stability function on an abelian category.

We prove our main result, Theorem \ref{thm:new hwc characterisations}, in \S\ref{sec: main thm}. For expository purposes we split it into two results here. Let $\cat{A}$ be a Deligne-finite $k$-linear category over an algebraically-closed field $k$. Fix an ordering  $S_1,\ldots,S_n$ of its simple objects. Let $P_i$ be corresponding projective covers, and $\stan{i}$,  respectively $\costan{i}$, the corresponding standard,  respectively  costandard, objects. Finally, let $[A]$ denote the class of $A\in \cat{A}$ in the Grothendieck group, and $[A:S]$ the  multiplicity of the simple object $S$ in a Jordan--H\"older filtration of $A$. Our first characterisation of highest weight is the following numerical criterion:

\newtheorem*{thm1}{Theorem 1}
\begin{thm1}
\label{thm:1}
    The category $\cat{A}$ is highest weight if, and only if, $[\stan{i}:S_i]=1$ and $[P_i]=\sum_{j=1}^n[\costan{j}:S_i]\, [\stan{j}]$ for each $i=1,\ldots,n$.
\end{thm1}

The necessity of this criterion follows directly from the definition of highest-weight category and Bernstein--Gelfand--Gelfand reciprocity. To prove its sufficiency, we consider the inductive construction of the projective covers over the recollements. The numerical condition on the class of the projective cover implies it has the maximum possible length, and the construction then shows this occurs when it has a filtration by standard objects.  

The second part of our main result is the following characterisation of highest weight in terms of stability functions:
\newtheorem*{thm2}{Theorem 2}
\begin{thm2}
\label{thm:2}
    The category $\cat{A}$ is highest weight if, and only if, it admits a stability function in which the $\stan{i}$ are stable of strictly increasing phase and the Harder--Narasimhan factors of each $P_i$ are direct sums of $\stan{j}$ for $j\geq i$.
\end{thm2}

The sufficiency of this condition is immediate; its necessity follows from Theorem \ref{thm:1}, which ensures we can construct a stability function in which the standard objects are stable. This is not surprising; the standard objects in a highest-weight category satisfy $\End{\cat{A}}{\stan{i}}\cong k$ and $\Mor{\cat{A}}{\stan{i}}{\stan{j}} = 0$ for $i>j$, so behave just like a sequence of stable objects of strictly increasing phase. Moreover, each $P_i$ has a `good' filtration by standard objects occurring in strictly decreasing order, which behaves just like a Harder--Narasimhan filtration. Theorem \ref{thm:2} confirms this behaviour is not accidental. 

In \S\ref{sec:properly-stratified} we generalise the stability criterion to a characterisation of {\em properly stratified} categories \cite{MR1781825}. This is a weaker notion in which we drop the requirement of $k$-linearity and only require that each projective has a filtration by {\em proper} standard objects --- see Definition~\ref{def:psc}. 

In \S\ref{sec:green-schroll} we recall Green and Schroll's  necessary and sufficient condition for the category of modules over a monomial algebra $\Lambda$ to be highest weight \cite{MR4017924}. Their condition is beautifully simple to check if one has a quiver description as it only involves checking the relations, and does not require the computation of projective and standard objects. In Corollary \ref{cor: gs} we give an alternative proof based on the numerical criterion from Theorem \ref{thm:1}.  More generally, any finite-dimensional algebra $\Lambda$ has an associated monomial algebra $\Lambda_\text{mon}$, isomorphic to $\Lambda$ as a vector space but with a modified multiplication. Green and Schroll show that $\mod{\Lambda}$ is highest weight whenever $\mod{\Lambda_\text{mon}}$ is. However, the converse fails, see Example \ref{ex:4}.

Both criteria use the fact that we have finitely many simple objects so that projective classes are finite sums of standard classes in the Grothendieck group, and the corresponding Harder--Narasimhan filtrations are finite. 

Finally, in \S\ref{sec: examples}, we present some simple examples which illustrate our results. We present them via quivers with relations, but the first four can also be interpreted as a category of perverse sheaves. Indeed, this paper is part of an ongoing project to understand the topological conditions under which a category of perverse sheaves (for an arbitrary perversity) is highest weight.

\subsection*{Acknowledgements}
The first author was funded by MUR PNRR--Seal of Excellence, CUP B37G22000800006 and supported by the ``National Group for Algebraic and Geometric Structures, and their Applications" (GNSAGA - INdAM). The authors also acknowledge the project funded by NextGenerationEU under NRRP, Call PRIN 2022 No. 104 of February 2, 2022 of Italian Ministry of University and Research; Project 2022S97PMY Structures for Quivers, Algebras and Representations (SQUARE). 

We thank Alexey Bondal for pointing out that (\ref{eqn:standard-costandard exts}) holds for highest weight categories, but not in the greater generality we had previously claimed. We also thank the anonymous referee for a number of helpful suggestions, inlcuding that we consider the case of properly stratified categories.

\section{Highest-weight categories}
\label{sec: hwc}

We recall the definition of highest-weight category, and summarise characterisations for a category to be highest weight due to Krause \cite{MR3742477} and Bodzenta--Bondal \cite{MR4780036}. We work with coefficients in an algebraically-closed field $k$. It is possible to define highest-weight categories more generally, as for example Krause does in \cite{MR3742477}, but for simplicity, and because it covers the examples we are primarily interested in, we do not do so. 

Let $\cat{A}$ be a $k$-linear abelian category. Further we assume that $\cat{A}$ is \defn{Deligne finite} as defined in \cite[Appendix A.3]{MR4780036}, \ie  that $\cat{A}$
\begin{enumerate}
    \item is $\mathrm{Hom}$ and $\mathrm{Ext}^1$-finite,
    \item is a length category, \ie Noetherian and Artinian,
    \item has finitely many simple objects and
    \item has enough projective objects.
\end{enumerate} 
The examples we have in mind are the category of perverse sheaves (for any perversity) on a stratified space with finitely many strata, each with finite fundamental group, and the category of finite-dimensional modules over a finite-dimensional algebra. Indeed, any Deligne-finite category $\cat{A}$ is equivalent to the category of finite-dimensional modules over $\End{\cat{A}}{P}$ where $P$ is a projective generator. In particular, $\cat{A}$ also has enough injective objects because $\End{\cat{A}}{P}$ is a finite-dimensional algebra.

Choose a total ordering $S_1,\ldots,S_n$ of (representatives of the iso-classes of) the simple objects of $\cat{A}$. This determines a filtration
\[
0=\cat{A}_0 \subset \cat{A}_1 \subset \cat{A}_2 \subset \cdots \subset \cat{A}_n=\cat{A}
\]
by the Serre subcategories $\cat{A}_i = \extcl{S_1,\ldots,S_i}$ given by the extension-closure of the first $i$ simple objects. For each $i$, let $P_i$ and $I_i$ be respectively a projective cover and an injective hull of $S_i$ in $\cat{A}$.
\begin{definition}
The \defn{standard object} $\stan{i}$ is the maximal quotient of $P_i$ lying in the subcategory $\cat{A}_i$, and the \defn{costandard object} $\costan{i}$ is the maximal subobject of $I_i$ lying in the subcategory $\cat{A}_i$. 
\end{definition}
The standard and costandard objects exist because $\cat{A}$ is length, and are non-zero because $S_i$ is a quotient of $P_i$ and a subobject of $I_i$. They depend upon the choice of ordering of the simple objects. This dependence is also apparent in the following alternative characterisation.
\begin{lemma}
\label{lem:standards are projectives}
    The standard object $\stan{i}$ is a projective cover, and the costandard object $\costan{i}$ an injective hull, of $S_i$ in $\cat{A}_i$.
\end{lemma}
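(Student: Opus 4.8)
The plan is to prove the assertion for the standard object $\stan{i}$ and to deduce the costandard case by duality: applying the result to $\cat{A}^{\mathrm{op}}$, in which injective hulls of $S_i$ become projective covers and maximal subobjects become maximal quotients, while the subcategory $\cat{A}_i=\extcl{S_1,\ldots,S_i}$ is unchanged (it consists of the objects all of whose composition factors lie in $\{S_1,\ldots,S_i\}$, a condition insensitive to passing to the opposite category), so that $\costan{i}$ is precisely the standard object for $\cat{A}^{\mathrm{op}}$. By definition $\stan{i}=P_i/K$, where $K$ is the smallest subobject of $P_i$ with $P_i/K\in\cat{A}_i$; such a smallest $K$ exists because $\cat{A}$ is Artinian and the subobjects $L$ with $P_i/L\in\cat{A}_i$ are closed under intersection (as $P_i/(L_1\cap L_2)\monic P_i/L_1\oplus P_i/L_2\in\cat{A}_i$ and $\cat{A}_i$ is closed under subobjects). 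Write $\pi\colon P_i\epic\stan{i}$ for the quotient map.

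First I would record the universal property of $\stan{i}$. For any $B\in\cat{A}_i$ and any $f\colon P_i\to B$, the image of $f$ is both a quotient of $P_i$ and a subobject of $B$, hence lies in $\cat{A}_i$; minimality of $K$ gives $K\subseteq\ker f$, so $f$ factors uniquely through $\pi$. Thus precomposition with $\pi$ is a natural isomorphism $\Mor{\cat{A}_i}{\stan{i}}{-}\cong\Mor{\cat{A}}{P_i}{-}\big|_{\cat{A}_i}$ of functors on $\cat{A}_i$. Since $\cat{A}_i\monic\cat{A}$ is exact (a Serre inclusion) and $P_i$ is projective in $\cat{A}$, the right-hand functor sends short exact sequences of $\cat{A}_i$ to short exact sequences of $k$-vector spaces; hence so does $\Mor{\cat{A}_i}{\stan{i}}{-}$, i.e.\ $\stan{i}$ is projective in $\cat{A}_i$.

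It then remains to see that the induced epimorphism $p\colon\stan{i}\epic S_i$ is essential. As $S_i$ itself lies in $\cat{A}_i$, minimality again yields $K\subseteq\ker(P_i\epic S_i)$, where $P_i\epic S_i$ is the projective cover; hence it factors as $p\circ\pi$. If $p$ restricted to some proper subobject $X\subsetneq\stan{i}$ were an epimorphism, then $\pi^{-1}(X)$ would be a proper subobject of $P_i$ (since $\pi$ is epi) onto which $P_i\epic S_i$ restricts epimorphically, contradicting essentiality of the projective cover in $\cat{A}$. Hence $p$ is an essential epimorphism from an object projective in $\cat{A}_i$, which is exactly the statement that $\stan{i}$ is a projective cover of $S_i$ in $\cat{A}_i$.

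The one genuinely non-formal point, and the step I would treat most carefully, is the passage to projectivity in $\cat{A}_i$: an object projective in $\cat{A}$ need not remain projective in a Serre subcategory, and it is the maximality built into the definition of $\stan{i}$ --- encoded in the factorisation property above --- that transfers projectivity of $P_i$ across the exact inclusion. Everything else is formal manipulation of subobjects and quotients valid in any abelian length category.
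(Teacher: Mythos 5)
Your proof is correct, and it takes a mildly different route from the paper's, though both turn on the same engine: the maximality of $\stan{i}$ among quotients of $P_i$ lying in $\cat{A}_i$, which forces every morphism $P_i \to B$ with $B \in \cat{A}_i$ to factor through $\pi\colon P_i \epic \stan{i}$. The paper deploys this factorization only in the special case of an extension $0 \to S_j \to E \to \stan{i} \to 0$ in $\cat{A}_i$: lifting $\pi$ through $E$ and factoring back splits the sequence, giving $\Ext{\cat{A}_i}{1}{\stan{i}}{S_j}=0$ for $j \leq i$, and this together with the computation that $\Mor{\cat{A}_i}{\stan{i}}{S_j}$ is $k$ for $j=i$ and $0$ for $j<i$ yields the projective cover property --- implicitly invoking the standard length-category facts that projectivity can be tested by $\mathrm{Ext}^1$-vanishing against the simples of $\cat{A}_i$ and that these Hom spaces identify the top of $\stan{i}$ as $S_i$, hence detect essentiality. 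You instead establish the universal property once and for all, obtaining the natural isomorphism $\Mor{\cat{A}_i}{\stan{i}}{-} \cong \Mor{\cat{A}}{P_i}{-}|_{\cat{A}_i}$, from which projectivity in $\cat{A}_i$ falls out of exactness of the Serre inclusion with no Ext computation, and you verify essentiality of $\stan{i} \epic S_i$ by hand, pulling a hypothetical proper subobject back along $\pi$ against the essentiality of the projective cover $P_i \epic S_i$. Your version is more self-contained (no reduction to simple objects, no appeal to the Hom/Ext characterisation of projective covers) at the cost of a longer essentiality check; the paper's is shorter modulo those standard facts. Your duality reduction for $\costan{i}$ matches the paper's, and is carried out more explicitly. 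One quibble with your closing remark: an object of a Serre subcategory that happens to be projective in the ambient category \emph{does} remain projective in the subcategory, since the inclusion is fully faithful and exact, so ambient lifts restrict; the genuine subtlety is rather that $P_i$ does not lie in $\cat{A}_i$ and its quotient $\stan{i}$ is typically not projective in $\cat{A}$ --- and it is exactly your factorization property that bridges this gap, which your proof handles correctly.
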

\begin{proof}
    The two statements are dual; we prove only the first. Since $\stan{i}$ is a quotient of $P_i$, the group $\Mor{\cat{A}_i}{\stan{i}}{S_j}\cong k$ for $j= i$ and vanishes for $0\leq j<i$. Therefore it suffices to show that $\Ext{\cat{A}_i}{1}{\stan{i}}{S_j}=0$ for $0\leq j \leq i$. Consider a short exact sequence
    \[
0 \to S_j \to E \to \stan{i} \to 0
    \]
    in $\cat{A}_i$, thus also in $\cat{A}$. Applying $\Mor{\cat{A}}{P_i}{-}$ shows that the quotient morphism $P_i \to \stan{i}$ factors through $E$. Since $\stan{i}$ is the maximal quotient in $\cat{A}_i$ the short exact sequence splits. Hence $\Ext{\cat{A}_i}{1}{\stan{i}}{S_j}=0$ for $0\leq j \leq i$.
\end{proof}

\begin{definition}
\label{def:hwc}
    The category $\cat{A}$ is \defn{highest weight with respect to the chosen ordering of its simple objects} if $\End{\cat{A}}{\stan{i}}\cong k$ and $P_i \in \extcl{\stan{i},\ldots,\stan{n}}$ for each $i=1,\ldots,n$. We say $\cat{A}$ is \defn{highest weight} if it is so for some ordering.
\end{definition}
\begin{remarks} This is the definition used in \cite[\S5.2]{MR4780036} except that we assume the simple objects are totally, not just partially, ordered. This is not as restrictive as it first appears. If $\cat{A}$ is highest weight with respect to a poset $\Lambda$ then $\Lambda$ is \defn{adapted} to $\cat{A}$ by \cite[Prop 1.4.12]{CondeThesis}. This means that if $A\in \cat{A}$ has top $S_\lambda$ and socle $S_\mu$, both simple and with $\lambda,\mu\in \Lambda$ incomparable, then $A$ has a composition factor $S_\nu$ with $\nu >\lambda$ or $\nu>\mu$. The importance of this condition is that it guarantees the stability of the standard and costandard objects under refinements of $\Lambda$. Therefore, if $\cat{A}$ is highest weight with respect to $\Lambda$, it is also highest weight for all total orders refining $\Lambda$. Thus there is no loss of generality in considering only total orders, but there may be a loss of naturality. For example, the simple perverse sheaves on a stratified space are naturally partially ordered by their supports.

    Our definition privileges the role of the standard objects, but in fact $\cat{A}$ is highest weight if and only if the opposite category is highest weight (for the same ordering of simple objects). This leads to an equivalent dual definition in terms of the costandard and injective objects, see for example \cite{Cline1988}. In the sequel we will always give the  characterisation from each dual pair involving the standard objects.
\end{remarks}

In Theorem \ref{thm:old hwc characterisations} we will give four alternative characterisations of highest-weight categories. In order to do so we introduce some terminology. 

\subsection*{Homological recollements}
An \defn{abelian recollement} \cite[1.4]{BBD} is a diagram
\[
\begin{tikzcd}
\cat{B} \ar{rr}{\imath_*} &&  
\cat{C} \ar{rr}{\jmath^*} \ar[bend right]{ll}[swap]{\imath^*} \ar[bend left]{ll}[swap]{\imath^!}
&& \cat{D} \ar[bend right]{ll}[swap]{\jmath_!} \ar[bend left]{ll}[swap]{\jmath_*}
\end{tikzcd}
\]
of abelian categories in which 
\begin{enumerate}
    \item $\imath_*$ is exact with respective left and right adjoints $\imath^*$ and $\imath^!$,
    \item $\jmath^*$ is exact with respective left and right adjoints $\jmath_!$ and $\jmath_*$,
    \item  $\imath_*$ is fully faithful with essential image the kernel of $\jmath^*$ and
    \item the counit and unit $\jmath^*\jmath_! \to \id_\cat{C}\to \jmath^*\jmath_*$ are isomorphisms.
\end{enumerate}
An abelian recollement is \defn{homological} \cite{MR3123754} if in addition the fully-faithful embedding $\imath_*$ induces isomorphisms
\[
\Ext{\cat{B}}{r}{B}{B'}\cong \Ext{\cat{C}}{r}{\imath_*B}{\imath_*B'}
\]
for all $B,B'\in \cat{B}$ and $r\geq0$. 

\subsection*{Strictly full exceptional sequences}
An object $E$ of a $k$-linear abelian category $\cat{A}$ is \defn{exceptional} if $\End{\cat{A}}{E}\cong k$ and $\Ext{\cat{A}}{k}{E}{E}=0$ for $k>0$. A sequence of objects $(E_1, \ldots , E_n)$ in $\cat{A}$ is \defn{exceptional} if each $E_i$ is exceptional and $\Ext{\cat{A}}{r}{E_i}{E_j} = 0$ for $i>j$ and $r \geq 0$. The sequence is \defn{full} if the objects $E_1, \ldots , E_n$ generate the bounded derived category $\der{b}{\cat{A}}$ as a triangulated category, and \defn{strictly full} if the inclusion $\extcl{E_1,\ldots,E_n} \monic \cat{A}$ induces a triangle equivalence 
\[
\der{b}{\extcl{E_1,\ldots,E_n} } \to \der{b}{\cat{A}}
\]
from the bounded derived category of the exact subcategory $\extcl{E_1,\ldots,E_n}$ to that of $\cat{A}$. As the terminology suggests, the latter is a strictly stronger condition; see \cite[Examples 5.9 and 5.10]{MR3742477} for full exceptional sequences which are not strictly full. 

\subsection*{Thin exact categories}
A $k$-linear, $\mathrm{Hom}$ and $\mathrm{Ext}^1$-finite exact category $\cat{E}$ is \defn{thin} \cite[\S3.1]{MR4780036} if it has a right admissible filtration 
\[
0 = \cat{E}_0 \subset \cat{E}_1 \subset \cdots \subset \cat{E}_n = \cat{E}
\]
whose graded factors are equivalent to $\kvect$ for $i=1,\ldots,n$. A full exact subcategory $\cat{T}\subset \cat{E}$ is \defn{right admissible} if $(\cat{T},\cat{T}^\perp)$ is a torsion pair in $\cat{E}$ where
\[
\cat{T}^\perp = \{ E\in \cat{E} \mid \Mor{\cat{E}}{T}{E}=0=\Ext{\cat{E}}{1}{T}{E}\ \forall\, T\in \cat{T} \}.
\]
Roughly, this is the exact category version of a homological recollement; see \cite{MR4780036} for full details.

\subsection*{Quasi-hereditary algebras}

A finite-dimensional $k$-algebra $\Lambda$ is \defn{quasi-hereditary} if there is a chain of two-sided ideals
\[
0 = J_0 \subset J_1 \subset \cdots \subset J_{n-1}\subset J_n = \Lambda
\]
such that $J_i/J_{i-1}$ is a heredity ideal in $\Lambda/J_{i-1}$ for all $i$. Here  $J$ is a heredity ideal in $\Lambda$  if 
\begin{enumerate}[(i)]
    \item $J^2=J$, 
    \item $JRJ=0$ where $R$ is the Jacobson radical of $\Lambda$ and 
    \item $J$ is projective as a left or right $\Lambda$-module.
\end{enumerate}

\begin{theorem}
\label{thm:old hwc characterisations}
The following are equivalent:
\begin{enumerate}
    \item The category $\cat{A}$ is highest weight with respect to the chosen ordering of simple objects.
    \item The diagram
$\cat{A}_{i-1} \monic \cat{A}_i \epic \cat{A}_i/\cat{A}_{i-1}$
    determines a homological recollement with $\cat{A}_i/\cat{A}_{i-1}\simeq \kvect$ for each $i=1,\ldots,n$. 
    \item The standard objects $(\stan{1},\ldots,\stan{n})$ form a strictly-full exceptional sequence in $D^b(\cat{A})$. 
    \item The extension-closure $\extcl{\stan{1},\ldots,\stan{n}}$ is a thin exact category for the filtration defined by the chosen ordering.
\end{enumerate}
Moreover, $\End{\cat{A}}{\bigoplus_{i=1}^n P_i}$ is a quasi-hereditary algebra if, and only if, $\cat{A}$ is highest weight with respect to some ordering of its simple objects.
\end{theorem}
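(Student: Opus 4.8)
The plan is to take condition (1) as the hub and establish the three equivalences (1)$\Leftrightarrow$(2), (1)$\Leftrightarrow$(3) and (1)$\Leftrightarrow$(4) in turn, before handling the quasi-hereditary statement separately. Throughout I would use the equivalence $\cat{A}\simeq \mod{\Lambda}$ for $\Lambda=\End{\cat{A}}{\bigoplus_{i=1}^n P_i}$ to pass freely between the categorical and module-theoretic descriptions, and I would lean on Lemma \ref{lem:standards are projectives}, which identifies $\stan{i}$ with the projective cover of $S_i$ in $\cat{A}_i$. This lemma already supplies the basic orthogonality $\Mor{\cat{A}_i}{\stan{i}}{S_j}=0=\Ext{\cat{A}_i}{r}{\stan{i}}{S_j}$ for $j<i$, which is the seed of every Ext-vanishing statement below.

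For (1)$\Leftrightarrow$(2) I would first note that each inclusion of Serre subcategories $\cat{A}_{i-1}\monic\cat{A}_i$, together with the Serre quotient $\cat{A}_i\epic\cat{A}_i/\cat{A}_{i-1}$, extends to an abelian recollement: the requisite adjoints $\imath^*,\imath^!,\jmath_!,\jmath_*$ exist because $\cat{A}_i$ is equivalent to the modules over a finite-dimensional algebra and $\cat{A}_{i-1}$ is cut out by an idempotent. The quotient is then a length category with a single simple object, the image of $S_i$, whose endomorphism ring is $k$ by Schur's lemma and algebraic closure; it is therefore equivalent to $\kvect$ exactly when that simple admits no self-extensions in the quotient. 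I would match this last condition, together with the homological isomorphism $\Ext{\cat{A}_{i-1}}{r}{B}{B'}\cong\Ext{\cat{A}_i}{r}{\imath_*B}{\imath_*B'}$, to the highest-weight requirements by identifying $\jmath_!$ of the quotient's simple generator with $\stan{i}$: the quotient is $\kvect$ and the recollement is homological for each $i$ precisely when each $\stan{i}$ is exceptional and $P_i\in\extcl{\stan{i},\ldots,\stan{n}}$. This is Krause's characterisation \cite{MR3742477}, which I would invoke for the precise equivalence rather than reprove.

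The remaining two equivalences I would then deduce with the recollement picture in hand. For (1)$\Leftrightarrow$(3), the orthogonality $\Ext{\cat{A}}{r}{\stan{i}}{\stan{j}}=0$ for $i>j$ holds because $\stan{j}\in\cat{A}_j\subseteq\cat{A}_{i-1}$ while the homological recollement forces the higher extensions of $\stan{i}$ into $\cat{A}_{i-1}$ to vanish; together with $\End{\cat{A}}{\stan{i}}\cong k$ this makes $(\stan{1},\ldots,\stan{n})$ an exceptional sequence, and its strict fullness --- the triangle equivalence $\der{b}{\extcl{\stan{1},\ldots,\stan{n}}}\to\der{b}{\cat{A}}$ --- is \cite[Thm 5.2]{MR3742477}. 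For (1)$\Leftrightarrow$(4), the same filtration realises $\extcl{\stan{1},\ldots,\stan{n}}$ as an exact category with a right-admissible filtration whose graded factors are $\kvect$, which is exactly thinness; the equivalence with (1) is \cite[Thm 5.8]{bodzenta}.

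Finally, for the quasi-hereditary statement I would argue directly from Cline--Parshall--Scott \cite[Thm 3.6]{Cline1988}: under $\cat{A}\simeq\mod{\Lambda}$ a heredity chain $0=J_0\subset\cdots\subset J_n=\Lambda$ corresponds to an ordering of the primitive idempotents for which $\cat{A}$ is highest weight, so $\Lambda$ is quasi-hereditary if and only if some ordering makes $\cat{A}$ highest weight. I expect the real obstacle to be the homological content of (1)$\Leftrightarrow$(2): showing that the abstract Ext-isomorphism across each recollement is genuinely equivalent to the good-filtration condition $P_i\in\extcl{\stan{i},\ldots,\stan{n}}$ requires an induction on the length of the filtration, tracking how relative projectives in the $\cat{A}_i$ assemble into genuine projectives of $\cat{A}$. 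This, and the derived equivalence underlying strict fullness, are the points where I would rely on Krause's and Bodzenta--Bondal's machinery rather than redo it.
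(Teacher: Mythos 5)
Your treatment of (1)$\Leftrightarrow$(2), (1)$\Leftrightarrow$(4) and the final quasi-hereditary statement matches the paper, which likewise disposes of these by citing Krause \cite[Thm 3.4]{MR3742477}, Bodzenta--Bondal \cite[\S 5]{bodzenta} and Cline--Parshall--Scott \cite[Thm 3.6]{Cline1988} respectively. But there is a genuine gap in your (1)$\Leftrightarrow$(3): you argue only the forward implication --- the standards of a highest-weight category form an exceptional sequence, with strict fullness cited from Krause --- and you never address (3)$\Rightarrow$(1) at all. (A minor slip compounds this: the forward direction is Krause's Prop 5.7; his Thm 5.2 is the \emph{converse} construction.) The converse is exactly where the paper does its one piece of original work in this theorem, and it does not follow formally from Krause. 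Applied to the strictly full exceptional sequence $(\stan{1},\ldots,\stan{n})$ in $D^b(\cat{A})$, Krause's Thm 5.2 produces \emph{some} bounded heart $\cat{A}'\subset D^b(\cat{A})$ with $D^b(\cat{A}')\simeq D^b(\cat{A})$ which is highest weight with the $\stan{i}$ as its standard objects; it does not assert $\cat{A}'=\cat{A}$, and without that identification you have not shown that $\cat{A}$ itself is highest weight.

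The paper closes this gap as follows: $\cat{A}'$ has a projective generator $P'$, so $\cat{A}'=\{A\in D^b(\cat{A})\mid \Ext{}{\neq 0}{P'}{A}=0\}$; since $\cat{A}'$ is highest weight, $P'\in\Filt{\stan{1},\ldots,\stan{n}}\subset\cat{A}$, whence $\Ext{}{<0}{P'}{A}=0$ for all $A\in\cat{A}$; and $\Ext{}{>0}{P'}{S_i}=0$ is proved by induction on $i$, using the short exact sequences $0\to K_i\to\stan{i}\to S_i\to 0$ with $K_i\in\cat{A}_{i-1}$ (so that $S_i\in\Filt{\stan{i}}*\cat{A}_{i-1}[1]$), the base case being $S_1=\stan{1}\in\cat{A}'$. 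This gives $\cat{A}\subset\cat{A}'$, and nested bounded hearts coincide, so $\cat{A}=\cat{A}'$. Some argument of this kind is indispensable for your equivalence as stated. Relatedly, your closing prediction that the ``real obstacle'' lies in the homological content of (1)$\Leftrightarrow$(2) misidentifies where the work is: that equivalence is a clean citation of Krause's Thm 3.4, and the induction you anticipate there is in fact needed (in a different form) for (3)$\Rightarrow$(1).
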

\begin{proof}
The equivalence of (1) and (2) is Krause \cite[Thm 3.4]{MR3742477}. 

The equivalence of (1) and (3) is also essentially due to Krause. In one direction, \cite[Prop 5.7]{MR3742477} states that the standard objects of a highest-weight category form a strictly full exceptional sequence in $D^b(\cat{A})$. In the other direction, \cite[Thm 5.2]{MR3742477} states that if $E_1,\ldots,E_n$ is a strictly full exceptional sequence in $D^b(\cat{A})$ then there is a bounded heart $\cat{A}'$ in $D^b(\cat{A})$ such that $D^b(\cat{A}') \simeq D^b(\cat{A})$ and which is highest weight with $E_1,\ldots,E_n$ as its standard objects. Applying this in the case when $E_i=\stan{i}$ for $i=1,\ldots,n$ yields a highest-weight heart $\cat{A}'$ in which the $\stan{i}$ are standard objects. We claim that $\cat{A}=\cat{A}'$. To see this note that $\cat{A}'$ has a projective generator $P'$ in terms of which 
\[
\cat{A}' = \{ A\in D^b(\cat{A}) \mid \Ext{}{\neq 0}{P'}{A}=0\}.
\]
Since $\cat{A}'$ is highest weight,  $P' \in \Filt{\stan{1},\ldots,\stan{n}} \subset \cat{A}$ so that
\[
\Ext{}{<0}{P'}{A}=0 \quad \forall A\in \cat{A}.
\]
To complete the proof we show $\Ext{}{>0}{P'}{S_i}=0$ for $i=1,\ldots,n$ by induction. Note that $\End{\cat{A}}{\stan{i}}\cong k$ since $\stan{i}$ is exceptional, so there are short exact sequences
\[
0 \to K_i \to \stan{i} \to S_i \to 0
\]
in $\cat{A}$ with kernel $K_i\in \cat{A}_{i-1}$. The base case follows since $S_1=\stan{1} \in \cat{A}'$, and the inductive step follows because $S_i \in \Filt{\stan{i}} * \cat{A}_{i-1}[1]$. Hence $\cat{A} \subset \cat{A}'$, and so  $\cat{A}=\cat{A}'$,  because nested bounded hearts are equal. 

The equivalence of (1) and (4) is proved in Bodzenta--Bondal  \cite[\S 5]{MR4780036}.

Finally, the relationship between quasi-hereditary algebras and highest-weight categories is due to Cline, Parshall and Scott \cite[Thm 3.6]{Cline1988}.
\end{proof}
\begin{remark}
    The second condition can be sharpened: Wiggins \cite[Corollary 6.6]{wiggins2025stratificationsabeliancategories} shows that $\cat{A}$ is highest weight if and only if each $\cat{A}_{i-1} \monic \cat{A}_i \epic \cat{A}_i/\cat{A}_{i-1}$ is a $2$-homological recollement, \ie there are isomorphism of Ext groups as in the definition of homological recollement only for $r\leq 2$. 
\end{remark}

We end the section by recalling some well-known properties of  highest-weight categories.
\begin{theorem}[{\cite[Thm 4.3]{PS}, \cite[Thms 1, 2]{MR0987824}, \cite[Ex 3.3]{Cline1988}}]
    If $\cat{A}$ is highest weight then $\gldim{\cat{A}}<\infty$. Moreover, $\cat{A}$ is highest weight for all orderings of its simple objects if and only if $\gldim{\cat{A}}\leq 1$, and if $\gldim{\cat{A}}\leq 2$ then it is highest weight for some ordering.
\end{theorem}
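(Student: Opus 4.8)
The statement bundles three classical facts, and I would prove each by leaning on the recollement description of Theorem \ref{thm:old hwc characterisations}(2), on Lemma \ref{lem:standards are projectives}, and (for the last part) on the quasi-hereditary reformulation. For finiteness of $\gldim{\cat{A}}$ the plan is induction on $n$ along the filtration $\cat{A}_1\subset\cdots\subset\cat{A}_n=\cat{A}$, with base case $\cat{A}_1\simeq\kvect$ of global dimension $0$. For the inductive step I use the homological recollement $\cat{A}_{i-1}\monic\cat{A}_i\epic\kvect$, controlled by three inputs: (i) by Lemma \ref{lem:standards are projectives} the object $\stan{i}$ is projective in $\cat{A}_i$ and fits in a short exact sequence $0\to K_i\to\stan{i}\to S_i\to 0$ with $K_i\in\cat{A}_{i-1}$; (ii) the homological condition identifies $\Ext{\cat{A}_{i-1}}{r}{B}{B'}$ with $\Ext{\cat{A}_i}{r}{B}{B'}$ for $B,B'\in\cat{A}_{i-1}$; and (iii) each projective cover in $\cat{A}_{i-1}$ is the cokernel of an inclusion of a direct sum of copies of $\stan{i}$ into a projective cover in $\cat{A}_i$, so has projective dimension at most $1$ in $\cat{A}_i$. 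Feeding (iii) into a projective resolution computed in $\cat{A}_{i-1}$ bounds the $\cat{A}_i$-projective dimension of any $B\in\cat{A}_{i-1}$ by $\gldim{\cat{A}_{i-1}}+1$, and then (i) bounds $\mathrm{pd}_{\cat{A}_i}S_i$ by $\gldim{\cat{A}_{i-1}}+2$. Hence $\gldim{\cat{A}_i}\leq\gldim{\cat{A}_{i-1}}+2$ and $\gldim{\cat{A}}\leq 2(n-1)$.

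For the equivalence with $\gldim{\cat{A}}\leq 1$ I argue both directions. If $\cat{A}$ is hereditary I fix an arbitrary ordering and prove $P_i\in\Filt{\stan{i},\ldots,\stan{n}}$ by downward induction on $i$: the kernel $U_i$ of $P_i\epic\stan{i}$ is a subobject of the projective $P_i$, hence projective, hence a direct sum of indecomposable projectives, and a maximality argument (any summand $P_j$ with $j\leq i$ would give a quotient $S_j\in\cat{A}_i$ of $U_i$, contradicting minimality of $U_i$) shows each summand is some $P_j$ with $j>i$; the inductive hypothesis then gives $U_i\in\Filt{\stan{i+1},\ldots,\stan{n}}$ and so $P_i\in\Filt{\stan{i},\ldots,\stan{n}}$, while finite-dimensionality forbids oriented cycles, so $[P_i:S_i]=1$ and $\End{\cat{A}}{\stan{i}}\cong k$. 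Conversely, assuming $\cat{A}$ is highest weight for \emph{every} ordering, I show $\Ext{\cat{A}}{2}{S_a}{S_b}=0$ for all $a,b$, which forces $\gldim{\cat{A}}\leq1$. Placing $S_a,S_b$ as the two smallest simples and telescoping the homological-recollement isomorphisms up the filtration gives $\Ext{\cat{A}}{2}{S_a}{S_b}\cong\Ext{\cat{A}_2}{2}{S_a}{S_b}$, where $\cat{A}_2=\extcl{S_a,S_b}$ inherits the highest-weight property for both of its orderings. A two-simple category that is highest weight for both orderings satisfies $[P_1:S_1]=[P_2:S_2]=1$, whence each radical $\mathrm{rad}\,P_i$ is a direct sum of copies of the other projective, hence projective; so $\mathrm{pd}\,S_i\leq 1$ and the Ext vanishes. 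The diagonal case $a=b$ is handled by placing $S_a$ smallest, where $\cat{A}_1\simeq\kvect$ annihilates all higher self-extensions.

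Finally, to show $\gldim{\cat{A}}\leq2$ implies $\cat{A}$ is highest weight for some ordering, I induct on $n$ using the quasi-hereditary description in the ``moreover'' clause of Theorem \ref{thm:old hwc characterisations}. Writing $\Lambda=\End{\cat{A}}{\bigoplus_i P_i}$, the goal is to peel off a simple $S$ from the top of the ordering, that is, to find a primitive idempotent $e$ with $e\Lambda e\cong k$ such that $J=\Lambda e\Lambda$ is a heredity ideal and $\gldim{\mod{(\Lambda/J)}}\leq2$. Granting such an $S$, the quotient $\Lambda/J$ has $n-1$ simples and global dimension at most $2$, so by induction it is quasi-hereditary for some ordering of the remaining simples; declaring $S$ maximal and prepending $J$ to the heredity chain of $\Lambda/J$ exhibits $\Lambda$ as quasi-hereditary, hence $\cat{A}$ as highest weight. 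The main obstacle is precisely the existence of this $S$: neither a simple projective nor a simple injective module need exist when $\gldim{\cat{A}}=2$, so one cannot simply remove an extremal vertex, and must instead produce a simple with $e\Lambda e\cong k$ whose trace ideal is projective and whose removal does not raise the global dimension. This is the technical heart of the statement and is exactly the content supplied by Dlab and Ringel; I would either invoke their analysis directly or reprove it by examining the second syzygies of the simple modules.
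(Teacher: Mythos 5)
The paper does not prove this theorem at all: it is stated as a classical result with citations to Parshall--Scott, Dlab--Ringel and Cline--Parshall--Scott. So your proposal cannot match the paper's (nonexistent) argument; judged on its own terms, it is a mostly sound reconstruction. Your part 1 (induction along the recollements using that $\stan{i}$ is projective in $\cat{A}_i$ by Lemma \ref{lem:standards are projectives}, that projectives of $\cat{A}_{i-1}$ have projective dimension at most $1$ in $\cat{A}_i$ via the short exact sequence $0 \to \stan{i}^{\,m} \to P \to \imath_*\imath^*P \to 0$, and the resulting bound $\gldim{\cat{A}}\leq 2(n-1)$) is correct and is essentially the standard argument. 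In part 2, the direction $\gldim{\cat{A}}\leq 1 \Rightarrow$ highest weight for every ordering is also correct: the minimality argument for $U_i=\ker(P_i \epic \stan{i})$ works (a quotient $S_j$ of $U_i$ with $j\leq i$ yields $V\subsetneq U_i$ with $P_i/V\in\cat{A}_i$, contradicting maximality of $\stan{i}$), and acyclicity of the quiver of a hereditary algebra gives $[\stan{i}:S_i]=1$, hence $\End{\cat{A}}{\stan{i}}\cong k$. Your reduction of the converse to the two-simple case via the telescoped homological-recollement isomorphisms of Theorem \ref{thm:old hwc characterisations}(2) is a nice touch and is valid, since the subcategories $\cat{A}_i$ inherit the highest-weight property.

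However, within that two-simple step there is one genuinely wrong inference: you claim $[P_1:S_1]=[P_2:S_2]=1$ \emph{whence} each $\mathrm{rad}\,P_i$ is a direct sum of copies of the other projective. The multiplicity conditions alone do not imply this, and the paper itself contains the counterexample: in Example \ref{ex:3}, $kQ/\langle \alpha\beta,\beta\alpha\rangle$ has $P_1$ and $P_2$ of length two with $[P_1:S_1]=[P_2:S_2]=1$, yet $\mathrm{rad}\,P_1\cong S_2$ is not projective and the global dimension is infinite. The conclusion you want is true, but you must use the full highest-weight hypothesis rather than its numerical shadow: highest weight for the ordering $(1,2)$ gives $P_1\in\Filt{\stan{1},\stan{2}}$ with $\stan{2}=P_2$ projective and $\stan{1}=S_1$, so the filtration rearranges to a short exact sequence $0 \to P_2^{\,m} \to P_1 \to S_1 \to 0$, giving $\mathrm{pd}\,S_1\leq 1$; the reversed ordering handles $S_2$. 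With that repair the direction goes through. Finally, for $\gldim{\cat{A}}\leq 2$ you correctly identify the reduction (peel off a simple $S$ with $e\Lambda e\cong k$ whose trace ideal $\Lambda e\Lambda$ is heredity and whose removal keeps the global dimension at most $2$) but you then concede the existence of such $S$ to Dlab--Ringel; since that existence is the entire content of the third assertion, your proposal leaves this part unproved --- which is, to be fair, exactly the level of detail the paper itself offers by citation.
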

The Grothendieck group $K(\cat{A})$ of a Deligne-finite category $\cat{A}$ is free of finite rank. When $\gldim{\cat{A}}<\infty$ there are three natural integral bases given by the classes of the simple objects, of their projective covers and of their injective hulls. Moreover, the Euler form
\[
\chi(A,B) = \sum_{r\in\Z} \Ext{\cat{A}}{r}{A}{B}
\]
is a well defined, non-degenerate bilinear form on $K(\cat{A})$. The above bases are dual with respect to the Euler form: 
\[
\chi([P_i],[S_j])=\delta_{ij}=\chi([S_i],[I_j]).
\]
When $\cat{A}$ is highest weight the classes of the standard and costandard objects give two further integral bases of $K(\cat{A})$. Moreover, in this case
\begin{equation}
    \label{eqn:standard-costandard exts}
\Ext{\cat{A}}{r}{\stan{i}}{\costan{j}} \cong
\begin{cases}
    k & i=j, r=0\\
    0 & \text{otherwise}
\end{cases}
\end{equation}
so that $\chi([\stan{i}],[\costan{j}])=\delta_{ij}$, \ie these bases are also dual.

Since $\cat{A}$ is length each object admits a Jordan--H\"older filtration with simple factors. We denote the multiplicity of the simple $S_i$ in the filtration of $A$ by $[A:S_i]$. By definition, when $\cat{A}$ is highest weight, the projective $P_i$ also admits a filtration whose factors are standard objects. Let $(P_i:\stan{j})$ denote the multiplicity of $\stan{j}$ as a factor in this filtration. These multiplicities can be interpreted as coefficients in appropriate change of basis matrices. A simple calculation using (\ref{eqn:standard-costandard exts}) shows that
\begin{equation}
\label{BGG reciprocity}
    (P_i:\stan{j}) = \chi([P_i],[\costan{j}]) = [\costan{j}:S_i].
\end{equation}
This is known as Bernstein--Gelfand--Gelfand reciprocity because the corresponding fact in the case of semisimple Lie algebras was first proved in \cite{MR0407097}. See also \cite[Thm 3.11]{Cline1988} where the dual statement, which they refer to as Brauer--Humphreys reciprocity, is proved in the abstract setting. 

\section{Stability functions}
\label{sec: stability functions}

We recall the notion of a stability function on an abelian category. See \cite{MR2373143} for further details, and for the relation between stability conditions on a triangulated category $\cat{D}$ and stability functions on the hearts of bounded t-structures on $\cat{D}$.

Let $\cat{A}$ be an abelian category and let $\HH=\{ r e^{i\pi\phi} \mid r>0, \phi\in(0,1] \}$ denote the upper half plane with the positive $x$-axis removed.  A \defn{stability function} on $\cat{A}$ is an additive group homomorphism $Z\colon K(\cat{A})\to\C$ such that $Z(A):=Z([A])\in\HH$ for any $A\in\cat{A}$. The \defn{phase} $0<\phi(A)\leq 1$ of an object $A$ is uniquely defined by the requirement $Z(A) \in \R_{>0}e^{i\pi\phi(A)}$.

An object $A\in\cat{A}$ is \defn{$Z$-semistable} if  $\phi(A')\leq\phi(A)$ for any proper subobject $A' \subset A$, and \defn{$Z$-stable} if the inequality is always strict. The full subcategory of semistable objects of phase $\phi$ is denoted $\cat{P}(\phi)$. It is a wide subcategory of $\cat{A}$, \ie it is closed under extensions, kernels and cokernels, and in particular is a full abelian subcategory. The following fact is well-known.

\begin{lemma}
\label{lem: end of stable}
If $S$ is stable then $\End{\cat{A}}{S}\cong k$.
\end{lemma}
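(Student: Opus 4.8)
The plan is to show that any nonzero endomorphism of a stable object $S$ is an isomorphism, from which it follows by standard Schur-type reasoning, using that $k$ is algebraically closed, that $\End{\cat{A}}{S}\cong k$.

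First I would take an arbitrary $f\in \End{\cat{A}}{S}$ and consider its image and kernel inside $S$. The key observation is that stability controls phases under passage to subobjects and quotients: if $A'\subset A$ is a proper subobject then $\phi(A')<\phi(A)$, and dually, since $Z$ is additive on the short exact sequence $0\to A'\to A\to A/A'\to 0$, the quotient $A/A'$ satisfies $\phi(A/A')>\phi(A)$ whenever $A'$ is a proper nonzero subobject. I would apply this to the short exact sequences $0\to \ker f\to S\to \operatorname{im}f\to 0$ and $0\to \operatorname{im}f\to S\to \operatorname{coker}f\to 0$. Suppose $f\neq 0$, so $\operatorname{im}f\neq 0$. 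If $\ker f$ were a nonzero proper subobject of $S$, then $\phi(\ker f)<\phi(S)$, forcing $\phi(\operatorname{im}f)>\phi(S)$ by additivity; but $\operatorname{im}f$ is a subobject of $S$, so stability gives $\phi(\operatorname{im}f)\leq\phi(S)$, a contradiction. Hence $\ker f=0$ and $f$ is a monomorphism. A symmetric argument on the second sequence shows $\operatorname{coker}f=0$: if $\operatorname{im}f$ were proper in $S$ then $\phi(\operatorname{im}f)<\phi(S)$, contradicting that $\operatorname{im}f\cong S$ already has phase $\phi(S)$. Thus $f$ is both mono and epi, hence an isomorphism. Therefore every nonzero endomorphism of $S$ is invertible, so $\End{\cat{A}}{S}$ is a division ring.

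To finish I would invoke finiteness and the algebraic closure of $k$. Since $\cat{A}$ is $\mathrm{Hom}$-finite, $\End{\cat{A}}{S}$ is a finite-dimensional division algebra over $k$. For any $f\in\End{\cat{A}}{S}$, the characteristic polynomial has a root $\lambda\in k$ because $k$ is algebraically closed, so $f-\lambda\,\id_S$ is a non-invertible endomorphism and hence must be zero by the previous paragraph. Therefore $f=\lambda\,\id_S$, giving $\End{\cat{A}}{S}\cong k$.

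I do not expect any serious obstacle here, as this is the classical Schur-lemma argument adapted to stability. The only point requiring care is the direction of the phase inequalities for quotients: one must justify that $\phi(A/A')>\phi(S)$ follows from $\phi(A')<\phi(S)$, which is an elementary consequence of the additivity of $Z$ on short exact sequences together with the geometry of $\HH$ (a complex number lying strictly above a ray, added to a vector on that ray, has strictly larger phase). Making this phase-addition step precise is the main thing to verify, but it is routine.
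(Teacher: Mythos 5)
Your proof is correct and takes essentially the same route as the paper's: both show a nonzero endomorphism must be an isomorphism because its image would otherwise be a proper subobject whose phase is simultaneously strictly greater than $\phi(S)$ (as a quotient, by the seesaw consequence of additivity of $Z$) and strictly smaller (by stability), and both then conclude via the Schur-type argument that a finite-dimensional division algebra over the algebraically closed field $k$ must be $k$. Your write-up merely makes explicit the kernel/image bookkeeping and the phase-addition inequality that the paper's one-line argument leaves implicit.
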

\begin{proof}
    Any endomorphism of a stable object $S$ must be an isomorphism because if the image were a proper subobject it would have phase both strictly greater and strictly lesser than that of $S$, which is impossible. Thus $\End{\cat{A}}{S}$ is a division algebra over $k$. Hence $\End{\cat{A}}{S}\cong k$ because $k$ is algebraically closed.
\end{proof}

We now discuss two important properties of stability functions, namely the Harder--Narasimhan and support properties. Both are automatic in the context we work in of Deligne finite categories. 

A stability function has the \defn{Harder-Narasimhan property} if each non zero object $A\in\cat{A}$ has a filtration
\[
0=A_0\monic A_1 \monic A_2\monic \ldots \monic A_d=H
\]
such that $A_i/A_{i-1}\in\cat{P}(\phi_i)$ with $\phi_1>\phi_2>\ldots>\phi_d$. Such a filtration is unique if it exists; its factors are referred to as the \defn{Harder--Narasimhan factors} of $A$. When $\cat{A}$ is a length abelian category any stability function has the Harder--Narasimhan property by \cite[Proposition 2.5]{MR2373143}. 

Suppose that $Z$ is a stability function on $\cat{A}$ which factorises through a finite rank free quotient $\lambda \colon K(\cat{A}) \to \Lambda$. Then $Z$ has the \defn{support property} if there is a constant $C>0$ such that 
\[
\lvert Z(A) \rvert\geq C\lVert \lambda([A]) \rVert
\]
for any semistable object $A\in\cat{A}$. Here $\lVert - \rVert$ is a choice of norm on $\Lambda\otimes\R$; the support property is independent of the choice. When $Z$ satisfies the support property the subcategories $P(\phi)$ of semistable objects are length abelian subcategories of $\cat{A}$. Each semistable object has a Jordan--H\"older filtration whose factors are stable objects of the same phase.

When $\cat{A}$ is a length abelian category with finitely many iso-classes of simple objects $K(\cat{A})$ is a finite rank free abelian group, with a basis given by the classes of the simple objects. In this case, we may choose $\Lambda=K(\cat{A})$ and any stability function has the support property because we may choose $C$ to be the minimum of $|Z(S)| / \lVert [S] \rVert$ over the simple objects $S$ of $\cat{A}$.

\section{Two new characterisations of highest-weight categories}
\label{sec: main thm}
We give two new characterisations of highest-weight categories, the first numerical in terms of the Grothendieck group and the second in terms of the existence of a special kind of Bridgeland stability condition.

\begin{theorem}
\label{thm:new hwc characterisations}
    Let $\cat{A}$ be a Deligne finite $k$-linear abelian category. Fix an ordering $S_1,\ldots,S_n$ of its simple objects. Then the following are equivalent:
    \begin{enumerate}[(i)]
        \item $\cat{A}$ is highest weight with respect to the given ordering;
        \item $[\stan{i}:S_i]=1$ and $[P_i] = \sum_{j=i}^n [\costan{j}:S_i] [\stan{j}]$ for $i=1,\ldots,n$;
        \item there is a stability function $Z \colon K(\cat{A}) \to \C$ on $\cat{A}$ such that
        \begin{enumerate}
            \item $\stan{1}, \ldots,\stan{n}$ stable with $\phi(\stan{1}) < \cdots < \phi(\stan{n})$ and
            \item the Harder--Narasimhan factors of each $P_i$ are direct sums of the standard objects $\stan{j}$ for $i\leq j$.
        \end{enumerate}
    \end{enumerate}
\end{theorem}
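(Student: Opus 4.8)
The plan is to prove the two equivalences (i)$\Leftrightarrow$(ii) and (i)$\Leftrightarrow$(iii) separately. Two of the four implications are short. For (i)$\Rightarrow$(ii): since $\stan{i}$ is the projective cover of $S_i$ in $\cat{A}_i$ (Lemma \ref{lem:standards are projectives}), projectivity gives $\dim_k\End{\cat{A}}{\stan{i}}=[\stan{i}:S_i]$, so the defining condition $\End{\cat{A}}{\stan{i}}\cong k$ is precisely $[\stan{i}:S_i]=1$; the class identity then follows from the good filtration of $P_i$ together with Bernstein--Gelfand--Gelfand reciprocity (\ref{BGG reciprocity}). For (iii)$\Rightarrow$(i): Lemma \ref{lem: end of stable} gives $\End{\cat{A}}{\stan{i}}\cong k$ from stability, and because the $\stan{j}$ have pairwise distinct phases, any Harder--Narasimhan factor of $P_i$ which is a direct sum of standards must be a sum of copies of a single $\stan{j}$ with $j\geq i$; reading off the Harder--Narasimhan filtration then exhibits $P_i\in\Filt{\stan{i},\ldots,\stan{n}}$, which together with $\End{\cat{A}}{\stan{i}}\cong k$ is highest weight.

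The real content lies in (ii)$\Rightarrow$(i). First I would observe that $[\stan{i}:S_i]=1$ is equivalent to $\cat{A}_i/\cat{A}_{i-1}\simeq\kvect$, since $[\stan{i}:S_i]$ equals the Cartan number of the one-simple quotient $\cat{A}_i/\cat{A}_{i-1}$, which is $1$ exactly when the unique simple of that quotient has no self-extensions. This places us in the abelian recollement $\cat{A}_{m-1}\monic\cat{A}_m\epic\cat{A}_m/\cat{A}_{m-1}$ for each $m$, and makes $\stan{m}$, which is $\jmath_!$ of the simple of the quotient, a projective object of $\cat{A}_m$. I would then construct the projective cover $Q_i^{(m)}$ of $S_i$ in $\cat{A}_m$ inductively, from $Q_i^{(i)}=\stan{i}$ up to $Q_i^{(n)}=P_i$. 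The canonical sequence $\jmath_!\jmath^*Q_i^{(m)}\to Q_i^{(m)}\to\imath_*\imath^*Q_i^{(m)}\to 0$ of the recollement identifies $\imath_*\imath^*Q_i^{(m)}$ with $Q_i^{(m-1)}$ and presents the kernel $N_m$ of $Q_i^{(m)}\epic Q_i^{(m-1)}$ as a quotient of $(\stan{m})^{a_m}$, where $a_m=[Q_i^{(m)}:S_m]$.

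The decisive step, and the one I expect to be the main obstacle, is a length count pinning $P_i$ to its maximal possible length. Adding lengths up the tower gives $\length{P_i}\leq\sum_{m=i}^{n}a_m\length{\stan{m}}$, and the Cartan-type symmetry $[Q_i^{(m)}:S_m]=[\costan{m}:S_i]$, valid since $\costan{m}$ is the injective hull of $S_m$ in $\cat{A}_m$ (Lemma \ref{lem:standards are projectives}), rewrites the bound as $\sum_{m\geq i}[\costan{m}:S_i]\length{\stan{m}}$. The class identity in (ii) forces $\length{P_i}$ to equal this sum exactly, so every inequality is an equality, each surjection $(\stan{m})^{a_m}\epic N_m$ is an isomorphism, and therefore $P_i\in\Filt{\stan{i},\ldots,\stan{n}}$. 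The delicate points here are establishing the symmetry $a_m=[\costan{m}:S_i]$ and deducing the splittings $N_m\cong(\stan{m})^{a_m}$ purely from equality of lengths.

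Finally, for (i)$\Rightarrow$(iii) I would construct the stability function directly. Since $[\stan{i}:S_i]=1$ and $[\stan{i}:S_j]=0$ for $j>i$, the classes $[\stan{1}],\ldots,[\stan{n}]$ form a basis of $K(\cat{A})$, so I may prescribe the values $Z(\stan{i})$; choosing them of rapidly increasing mass, with phases $\phi(\stan{1})<\cdots<\phi(\stan{n})$ strictly increasing and all $Z(S_j)\in\HH$, is possible because the $S_i$-term dominates $Z(\stan{i})$. Stability of $\stan{i}$ is then automatic: any proper subobject lies in $\cat{A}_{i-1}$, since $\stan{i}$ has simple top $S_i$ occurring once, and so has phase strictly below $\phi(\stan{i})$. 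The good filtration of $P_i$ has factors among $\stan{i},\ldots,\stan{n}$; as these are stable of distinct phases with $\Ext{\cat{A}}{1}{\stan{j}}{\stan{j}}=0$, grouping by phase identifies the Harder--Narasimhan factors as the direct sums $(\stan{j})^{(P_i:\stan{j})}$, which is (iii).
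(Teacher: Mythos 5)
Your proposal is correct, and its engine is the same as the paper's: the inductive construction of projective covers over the recollements $\cat{A}_{m-1}\monic\cat{A}_m\epic\cat{A}_m/\cat{A}_{m-1}$, the presentation of each kernel as a quotient of $\stan{m}^{a_m}$ with $a_m=[Q_i^{(m)}:S_m]=[\costan{m}:S_i]$, and the length count that the class identity in (ii) forces into equality, splitting every stage. The difference is organisational. The paper proves the cycle (i)$\Rightarrow$(ii)$\Rightarrow$(iii)$\Rightarrow$(i), placing this argument inside (ii)$\Rightarrow$(iii), so the short exact sequences it produces hand over the Harder--Narasimhan filtration of $P_i$ already in strictly decreasing phase order, with no further work; you prove (ii)$\Rightarrow$(i) and (i)$\Rightarrow$(iii) separately, and the second step obliges you to convert an arbitrary good filtration of $P_i$ into one ordered by phase. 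For that conversion the vanishing you cite, $\Ext{\cat{A}}{1}{\stan{j}}{\stan{j}}=0$, does not suffice: to swap adjacent factors you also need $\Ext{\cat{A}}{1}{\stan{j}}{\stan{l}}=0$ for $j>l$. This is true and cheap --- $\stan{j}$ is projective in $\cat{A}_j$ by Lemma \ref{lem:standards are projectives}, and $\mathrm{Ext}^1$ computed in a Serre subcategory agrees with $\mathrm{Ext}^1$ in $\cat{A}$ --- but it must be invoked, so your one-line ``grouping by phase'' hides a genuine (if easily filled) step that the paper's cyclic route avoids entirely. Two smaller remarks: your observation that $[\stan{i}:S_i]=1$ is equivalent to $\cat{A}_i/\cat{A}_{i-1}\simeq\kvect$ is correct and usefully makes explicit what the paper uses silently when it writes $\jmath_!\jmath^*S_n\cong P_n=\stan{n}$; and your stability function, prescribed on the basis $[\stan{1}],\ldots,[\stan{n}]$ with rapidly growing masses, is a workable dual of the paper's construction (which prescribes $Z$ on the simple classes and tunes masses so that $\phi_{i-1}<\phi(\stan{i})\leq\phi_i$), though for stability of $\stan{i}$ you should verify explicitly that $\phi(S_j)<\phi(\stan{i})$ for all $j<i$, since that is what guarantees every proper subobject of $\stan{i}$ --- which your top-multiplicity argument correctly places in $\cat{A}_{i-1}$ --- has strictly smaller phase.
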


\begin{proof}
    (i)$\implies$(ii): By definition $\End{\cat{A}}{\stan{i}} \cong k$ and $P_i \in \Filt{\stan{i},\ldots,\stan{n}}$. By Lemma \ref{lem:standards are projectives} the standard object  $\stan{i}$ is a projective cover of $S_i$ in $\cat{A}_i$. Therefore $[\stan{i}:S_i] =  \dim \End{\cat{A}}{\stan{i}} = 1$. Moreover,  
    \[
[P_i] = \sum_{j=i}^n (P_i:\stan{j})[\stan{j}] = \sum_{j=i}^n [\costan{j}:S_i][\stan{j}]
    \]
     by Bernstein--Gelfand--Gelfand reciprocity (\ref{BGG reciprocity}).
     
     (ii)$\implies$(iii): Choose phases $0=\phi_0 < \phi_1 < \cdots < \phi_n <1$. The classes of the simple objects form a basis for $K(\cat{A})$ so a stability function $Z$ is uniquely determined by setting $Z(S_i)=m(S_i)e^{i\pi\phi_i}$ for $i=1,\ldots,n$ where $m(S_i)>0$. Since $\stan{i} \in \cat{A}_i$ with $[\stan{i}:S_i] =1$ one can choose the $m(S_i)$ so that 
     \[
\phi_{i-1} < \phi(\stan{i}) \leq \phi_i
     \]
     for each $i=1,\ldots,n$, with $\phi(\stan{i})=\phi_i$ if and only if $\stan{i}=S_i$. Since every proper subobject of $\stan{i}$ lies in $\cat{A}_{i-1}$ it follows that each $\stan{i}$ is stable.

     Let $\imath_* \colon \cat{A}_{n-1} \hookrightarrow \cat{A}$ be the inclusion and $\jmath^* \colon \cat{A} \to \cat{A}/\cat{A}_{n-1} \simeq \kvect$ the quotient. For each projective cover $P_i$ there is an exact sequence
     \begin{equation}
         \label{eqn: es from recollement}
\jmath_!\jmath^*P_i \to P_i \to \imath_*\imath^*P_i \to 0.
      \end{equation}
     Since $\jmath_!$ is left adjoint to the exact functor $\jmath^*$ it preserves projective objects. Therefore $j_!j^*P_i \cong \stan{n}^{[\costan{n}:S_i]}$ because $\jmath_!\jmath^*S_n \cong P_n = \stan{n}$ and
         \[
         [P_i:S_n] = \dim \Mor{}{P_i}{\costan{n}} = [\costan{n}:S_i]
         \]
         using the dual fact that $I_n=\costan{n}$. Combining this with (\ref{eqn: es from recollement}) shows that the lengths, \ie the numbers of simple objects in the Jordan--H\"older filtration, satisfy
    \[
    \length{P_i} \leq [\costan{n}:S_i]\cdot \length{\stan{n}} + \length{\imath^*P_i}
    \]
    with equality if and only if (\ref{eqn: es from recollement}) is short exact.

 Now using the fact that $\imath^*$ is left adjoint to the exact functor $\imath_*$, and so preserves projective objects, we conclude that $\imath^*P_i$ is the projective cover of $S_i$ in $\cat{A}_{n-1}$ when $i<n$ and vanishes when $i=n$. Thus we can apply the above argument inductively to show that
 \begin{equation}
 \label{eqn: length inequality}
     \length{P_i} \leq \sum_{j=1}^n [\costan{j}:S_i]\cdot \length{\stan{j}}.
 \end{equation}
The assumption $[P_i] = \sum_{j=i}^n [\costan{j}:S_i][\stan{j}]$
 implies this is an equality. Therefore each of the exact sequences used to derive (\ref{eqn: length inequality}) is actually short exact. These short exact sequences provide the desired Harder--Narasimhan filtration of $P_i$.

     (iii)$\implies$(i): Since $\stan{i}$ is stable $\End{\cat{A}}{\stan{i}} \cong k$ by Lemma \ref{lem: end of stable}. The condition $P_i \in \Filt{\stan{i},\ldots,\stan{n}}$ follows immediately from the description of the Harder--Narasimhan filtration of $P_i$.
\end{proof}
See Section~\ref{sec: examples} for examples. 
\begin{remarks}
\begin{enumerate}
    \item The proof shows that $\cat{A}$ is highest weight when the indecomposable projective objects have the maximal possible length. 
    \item If the Harder--Narasimhan factors of a projective object are direct sums of standard objects for some stability function in which the standard objects are stable with $\phi(\stan{1})< \cdots < \phi(\stan{n})$, then they are so for all such stability functions, because the Harder--Narasimhan filtration is unique. Therefore it suffices to check the third condition for any such stability function.
\end{enumerate}
\end{remarks}

\section{Properly stratified categories}
\label{sec:properly-stratified}

The characterisation of highest weight categories in terms of stability conditions in Theorem~\ref{thm:new hwc characterisations}  generalises to properly stratified abelian categories, but we do not know of a numerical criterion in this generality.

Let $\cat{A}$ be a length abelian category, not necessarily $k$-linear, with finitely many iso-classes of simple objects and enough projectives. Choose a total ordering $S_1,\ldots,S_n$ of  representatives of the iso-classes of simple objects. Define  the standard object $\stan{i}$ as before to be the maximal length quotient of the projective cover $P_i$ of $S_i$ in $\Filt{S_j \mid j\leq i}$. Define the \defn{proper standard} object $\pstan{i}$ to be the maximal length quotient of $\stan{i}$ with $[\pstan{i} \colon S_i]=1$. The proper standard objects are well-defined up to isomorphism. Note that $\Mor{}{\pstan{j}}{\pstan{i}}=0$ when $i<j$ because $\Mor{}{P_j}{\pstan{i}}=0$. The following definition is a genralisation of the notion of (the module category of) a \defn{properly stratified algebra} \cite[Defn.~4]{MR1781825}. When the properly stratified category is also Hom-finite then it is equivalent to the module category of a properly stratified algebra, namely the endomorphism algebra of a projective generator. 
\begin{definition}
\label{def:psc}
The category $\cat{A}$ is \defn{properly stratified} with respect to the chosen order if $P_i \in \Filt{ \pstan{j} \mid i \leq j}$  for each $i = 1,\ldots,n$.
\end{definition}
This  is a generalisation of Definition~\ref{def:hwc}, and the following is the accompanying generalisation of the stability function characterisation of highest weight categories in Theorem~\ref{thm:new hwc characterisations}. 
See Example~\ref{ex: 5} for a properly stratified category which is not highest weight. 
\begin{theorem}
\label{thm:properly stratified}
A length abelian category $\cat{A}$ with finitely many iso-classes of simples and enough projectives is standardly stratified if and only if there exists a stability function $Z \colon K(\cat{A}) \to \C$ such that
\begin{enumerate}
\item the proper standard objects $\pstan{i}$ are stable with  $\phi(\pstan{i}) <\phi(\pstan{j})$ when $i < j$ and
\item the Harder--Narasimhan factors of each $P_i$ are self-extensions of $\pstan{j}$ for $i\leq j$.
\end{enumerate}
\end{theorem}
\begin{proof}
Suppose $\cat{A}$ is properly stratified. Using the fact that $[\pstan{i} \colon S_i]=1$ we can construct a stability function $Z \colon K(\cat{A}) \to \C$ in which the proper standards are stable of strictly increasing phase just as in the proof of Theorem~\ref{thm:new hwc characterisations}. By assumption, each indecomposable projective $P_i$ has a filtration by proper standard objects $\pstan{j}$ where $i\leq j$. Since $\Mor{}{\pstan{j}}{\pstan{k}}=0$ when $j>k$ this filtration can be ordered by decreasing index $j \in\{i,\ldots,n\}$. It is then a filtration with semistable factors, each a self-extension of a proper standard $\pstan{j}$ with $i\leq j$,  of strictly decreasing phase. By uniqueness it is the Harder--Narasimhan filtration, which therefore has the required form.

Conversely, suppose there is such a stability function $Z$. Then the Harder--Narasimhan filtrations of the indecomposable projectives are the required  filtrations by proper standard objects. Hence $\cat{A}$ is properly stratified.
\end{proof}
\begin{remark}
Since the proper standard objects are stable they are bricks. This is immediate from their definition when the simple objects are totally ordered, but may fail to be the case if they are only partially ordered. We do not know if categories which are properly stratified with respect to a partial order can be characterised by the existence of stability functions as above for which the proper standard objects are semistable, but not necessarily stable.
\end{remark}

\section{Green--Schroll's sufficient criterion}
\label{sec:green-schroll}

Green and Schroll \cite{MR4017924} give a necessary and sufficient criterion for a finite-dimensional monomial algebra to be quasi-hereditary, equivalently for its module category to be highest weight. They also show this leads to a sufficient condition for a finite-dimensional algebra to be quasi-hereditary by applying it to an associated monomial algebra. Their proof uses non-commutative Gr\"obner basis theory; we give an alternative derivation of their necessary and sufficient condition from our numerical characterisation.

Let $Q$ be a finite quiver without loops and with vertices $1,\ldots,n$. Let $I \subset kQ$ be an admissible ideal which is generated by paths, rather than by sums of paths. Then $kQ/I$ is a finite-dimensional monomial algebra. Let $\cat{A} = \mod{kQ/I}$ and use the previous notation for its simple, projective, injective, standard and costandard objects, the latter with respect to the ordering of simple objects by the corresponding vertices. 

Let $s,t \colon Q_1 \to Q_0$ be respectively the source and target maps from arrows to vertices of $Q$. A \defn{path} of length $d$ in $Q$ is a concatenation $\alpha_1\ldots \alpha_d$ of arrows where $t(\alpha_i) = s(\alpha_{i+1})$ for $i=1,\ldots, d-1$; a \defn{cycle} is a path for which $s(\alpha_1)=t(\alpha_d)$. The admissible ideal $I$ has a canonical generating set $G$ consisting of the minimal length paths in $I$. 

We refer to $t(\alpha_1),\ldots,t(\alpha_{d-1})$ as \defn{internal}, and $s(\alpha_1)$ and $t(\alpha_d)$ as \defn{external}, vertices of the path $\alpha_1\ldots \alpha_d$. We say a vertex $i$ in a path is \defn{maximal} if $i \geq s(\alpha), t(\alpha)$ for all arrows $\alpha$ in the path. Let $Q_{\leq k}$ denote the full subquiver on vertices $1,\ldots,k$, and $\pi^k_{ij}$ be the number of paths from $i$ to $j$ in $Q_{\leq k}$ which are not in the ideal $I$. 

The third condition in the following result is Green and Schroll's criterion.
\begin{corollary}
\label{cor: gs}
The following are equivalent:
\begin{enumerate}[(i)]
    \item $\mod{kQ/I}$ is highest weight for the given ordering;
    \item $\pi_{ii}^i=1$ and $\pi_{ik}^n = \sum_{j=1}^n \pi_{ij}^j\pi_{jk}^j$ for each $1\leq i,k\leq n$;
    \item all maximal vertices of each path in $G$ are external.
    \end{enumerate}
\end{corollary}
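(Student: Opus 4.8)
The strategy is to prove the three conditions equivalent by connecting them through the numerical criterion of Theorem \ref{thm:new hwc characterisations}. The first equivalence, (i)$\iff$(ii), should follow almost formally by translating the abstract multiplicities appearing in Theorem \ref{thm:new hwc characterisations}(ii) into the path-counting quantities $\pi^k_{ij}$. The remaining and more substantial work is the equivalence (ii)$\iff$(iii), which requires genuinely understanding the combinatorics of paths modulo a monomial ideal. I anticipate (ii)$\iff$(iii) to be the main obstacle.

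\textbf{Step 1: Identify the numerical invariants combinatorially.} First I would record the standard description of modules over a monomial algebra in terms of paths. For $\Lambda = kQ/I$ with $I$ monomial, the indecomposable projective $P_i$ has a basis given by the paths starting at $i$ that are not in $I$, so $[P_i:S_j]$ equals the number of paths from $i$ to $j$ not in $I$; in the notation of the statement this is $\pi^n_{ij}$. Working inside the Serre subcategory $\cat{A}_k = \extcl{S_1,\ldots,S_k}$ corresponds to restricting to paths all of whose vertices lie in $\{1,\ldots,k\}$, i.e.\ to the subquiver $Q_{\leq k}$; hence the projective cover of $S_i$ in $\cat{A}_k$ has composition multiplicities $\pi^k_{ij}$. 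By Lemma \ref{lem:standards are projectives}, $\stan{i}$ is precisely the projective cover of $S_i$ in $\cat{A}_i$, so $[\stan{i}:S_j] = \pi^i_{ij}$, and in particular $[\stan{i}:S_i]=\pi^i_{ii}$. Dualising (or counting paths ending at $i$), the costandard multiplicity is $[\costan{j}:S_k]=\pi^j_{kj}$.

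\textbf{Step 2: Deduce (i)$\iff$(ii).} With the dictionary from Step 1 in hand, the condition $[\stan{i}:S_i]=1$ becomes $\pi^i_{ii}=1$, and the condition $[P_i]=\sum_{j=i}^n[\costan{j}:S_i]\,[\stan{j}]$ becomes, after taking the coefficient of $[S_k]$ on both sides, $\pi^n_{ik} = \sum_{j} \pi^j_{ij}\,\pi^j_{jk}$. I would check that the summation range may be taken over all $j$ (not just $j\geq i$), since $\pi^j_{ij}=0$ unless $j\geq i$ and $\pi^j_{jk}=0$ unless $j\geq k$ force the terms to vanish appropriately; this matches the symmetric form in (ii). This gives (i)$\iff$(ii) directly from Theorem \ref{thm:new hwc characterisations}.

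\textbf{Step 3: Prove (ii)$\iff$(iii), the combinatorial heart.} The condition $\pi^i_{ii}=1$ says there is no cycle at $i$ through vertices $\leq i$ other than the trivial one, which I expect to correspond to a no-loop/no-small-cycle condition that (iii) subsumes. For the main identity, the interpretation is that every path from $i$ to $k$ in $Q_{\leq n}$ not in $I$ should factor uniquely through its \emph{maximal} vertex $j$: the portion from $i$ to $j$ and from $j$ to $k$ each using only vertices $\leq j$. The inequality $\pi^n_{ik}\geq\sum_j\pi^j_{ij}\pi^j_{jk}$ always holds by concatenation, so the content is that concatenation at the maximal vertex is a \emph{bijection}. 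The failure of injectivity or surjectivity is exactly controlled by whether a maximal vertex can be internal: if some path in $G$ (a minimal relation) has an internal maximal vertex, one can exhibit a path whose maximal vertex is internal and which is therefore miscounted, breaking the equality; conversely, if all maximal vertices of relations are external, every non-zero path's unique maximal vertex is external, and cutting there gives the bijection. The delicate point I expect to spend the most care on is handling a path whose maximal value is attained at several vertices, and ensuring the factorisation-at-the-maximum is well-defined and compatible with membership in $I$; this is where the precise definition of ``maximal vertex'' and the minimality of the generators $G$ must be used carefully.
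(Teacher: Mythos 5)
Your overall route is the paper's own: Steps 1--2 are exactly the paper's argument for (i)$\iff$(ii) --- identify $\pi^n_{ik}=[P_i:S_k]$, $\pi^j_{jk}=[\stan{j}:S_k]$ and $\pi^j_{ij}=[\costan{j}:S_i]$ via Lemma \ref{lem:standards are projectives}, then compare coefficients of $[S_k]$ and note the terms with $j<i$ vanish --- and your Step 3 frames (ii)$\iff$(iii) as the paper does, as bijectivity of factorisation at the maximal vertex. However, one claim in Step 3 is wrong, and wrong in a way that matters: the inequality $\pi^n_{ik}\geq\sum_j\pi^j_{ij}\pi^j_{jk}$ does \emph{not} ``always hold by concatenation''. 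The concatenation of two paths not in $I$ can lie in $I$, so concatenation is not a well-defined map into the set counted by $\pi^n_{ik}$; moreover distinct pairs can concatenate to the same path when the maximal vertex occurs more than once. Concretely, in the paper's Example \ref{ex:2} (the quiver with arrows $\alpha\colon 1\to 2$, $\beta\colon 2\to 1$ and $I'=\langle\alpha\beta\rangle$) one has $\pi^2_{22}=2$, counting $e_2$ and $\beta\alpha$, while $\sum_j \pi^j_{2j}\pi^j_{j2}=\pi^2_{22}\cdot\pi^2_{22}=4$: the pair $(\beta\alpha,\beta\alpha)$ concatenates to $\beta\alpha\beta\alpha\in I'$, and the pairs $(e_2,\beta\alpha)$ and $(\beta\alpha,e_2)$ both give $\beta\alpha$.

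The universally valid inequality goes the other way, $\pi^n_{ik}\leq\sum_j\pi^j_{ij}\pi^j_{jk}$, and comes from \emph{cutting}: since $I$ is monomial, every subpath of a path not in $I$ is again not in $I$, so cutting a path not in $I$ at each occurrence of its maximal vertex $j$ injectively produces pairs counted by $\pi^j_{ij}\pi^j_{jk}$. This is the inequality the paper's proof exploits: equality holds precisely when every path not in $I$ has a unique maximal vertex \emph{and} every concatenation at maximal vertices avoids $I$, and in the direction (ii)$\Rightarrow$(iii) the failure of either property makes $\pi^n_{ik}$ strictly \emph{smaller} than the sum --- the opposite of what your framing predicts, so an argument built on your inequality would look for the contradiction on the wrong side. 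The other ingredients you flag (uniqueness of the maximal vertex for paths not in $I$; that (iii) forces $\pi^i_{ii}=1$, which the paper gets from the minimal power of a surviving cycle containing a generator with internal maximal vertex; and, conversely, splitting a generator of $G$ at an internal maximal vertex) are exactly the paper's, so once the inequality is reversed your plan becomes the paper's proof.
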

\begin{proof}
    (i)$\iff$(ii): Since $I$ is generated by paths one can reinterpret certain counts of paths as Jordan--H\"older multiplicities. In particular $\pi_{ik}^n = [P_i:S_k]$, and similarly
    \[
\pi_{ij}^j = [S_i : \costan{j}] \quad \text{and} \quad \pi_{jk}^j = [\stan{j}:S_k]
\]
because $\costan{j}$ and $\stan{j}$ are respectively the injective hull and projective cover of $S_j$ in $\Filt{S_1,\ldots,S_j}$ by Lemma \ref{lem:standards are projectives}. The equivalence then follows from Theorem \ref{thm:new hwc characterisations} and the observation that
\[
[P_i] = \sum_{j=1}^n [\costan{j}:S_i]\, [\stan{j}]
\]
if and only if $[P_i:S_k] = \sum_{j=1}^n [\costan{j}:S_i]\, [\stan{j}:S_k]$ for each $k=1,\ldots,n$.

    (ii)$\iff$(iii): Suppose all maximal vertices of each path in the generating set $G$ are external. Then $\pi_{ii}^i=1$ because if $\gamma \not \in I$ is a strictly positive length cycle at $i$ in $Q_{\leq i}$, then the minimal power of $\gamma$ which is in $I$ would contain a path in $G$ with $i$ as a maximal internal vertex. 
    
    It follows that each path not in $I$ has a unique maximal vertex: for if the maximal vertex, $i$ say, occurs twice then the path contains a strictly positive length cycle at $i$ in $Q_{\leq i}$ which is not in $I$. Moreover, the concatenation of two  paths not in $I$ at their maximal vertices, $i$ say,  cannot be in $I$ because if it were it would contain a path in $G$ with $i$ as an maximal internal vertex. Combining these two facts we see that
    \[
\pi_{ik}^n  = \sum_{j=1}^n \# \{ \text{paths $i$ to $k$ not in $I$ with maximal vertex $j$}\} = \sum_{j=1}^n \pi_{ij}^j\pi_{jk}^j
    \]
    for all $1\leq i,k\leq n$. Conversely, when these equations hold each path not in $I$ must have a unique maximal vertex, and the concatenation of two paths not in $I$ at their maximal vertices is also not in $I$, for otherwise $\pi_{ik}^n$ would be strictly smaller than the sum on the right. The latter implies that the maximal vertex of any path in the generating set $G$ is external, for otherwise splitting the path at a maximal internal vertex would produce a pair of paths not in $I$ whose concatenation at their maximal vertices was in $I$. This completes the proof.
    \end{proof}

\section{Examples}
\label{sec: examples}

We illustrate our results through five simple examples. We describe each as the category of representations of a quiver  with relations, but the first four can also be interpreted as categories of perverse sheaves.

In each case the quiver will have vertices $1,\ldots,n$ and we order the simple representations accordingly. Following the convention of the previous section, the composite of two arrows $\alpha:i\to j$ and $\beta:j\to k$ is denoted $\alpha\beta$.

The first pair of examples illustrate the fact that a global dimension $2$ category may be highest weight only for some orderings of the simple objects.

\begin{example}
\label{ex:1} Let $A=kQ/I$ where $Q$ is the quiver
\begin{equation}
\label{quiver1}
\begin{tikzcd}
1 \arrow[r,bend left,"\alpha"]  & 2 \arrow[l,bend left,"\beta"]
\end{tikzcd}
\end{equation}
with relations $I=\langle \beta\alpha\rangle$. Its representations are equivalent to the category of perverse sheaves on $\C\PP^1$ stratified by a point and its complement. The global dimension is $2$. This is highest weight by \cite[Thm 3.3.1]{MR1322847}, and indeed this is immediately detected by Green-Schroll's criterion since the maximal vertices in the generating relation $\beta\alpha$ are external.

There are five indecomposable representations:  the Auslander--Reiten quiver is depicted in Figure \ref{fig:AR_quiver_1}. The standard and costandard objects are $\stan{1}=S_1=\costan{1}$,  $\stan{2}=P_2$ and $\costan{2}=I_2$. As expected our numerical criteria hold: $[\stan{1}:S_1] = 1 = [\stan{2}:S_2]$, 
\[
[P_1] = [\stan{1}]+ [\stan{2}] = [\costan{1}:S_1][\stan{1}]+ [\costan{2}:S_1][\stan{2}]
\]
and $[P_2] = [\stan{2}] = [\costan{1}:S_2][\stan{1}]+ [\costan{2}:S_2][\stan{2}]$. As predicted, there is a stability function in which $\stan{1}$ and $\stan{2}$ are stable with $\phi(\stan{1}) < \phi(\stan{2})$ and the Harder--Narasimhan factors of $P_1$ and $P_2$  are standard  --- see Figure \ref{fig:stability_function}.
\end{example}

\begin{figure}
    \begin{tikzcd}
    &&\\
    && P_1=I_1 \ar[dr] &&\\
     & P_2 \ar[ur]\ar[dr]\ar[rr,dash,dashed] && I_2 \ar[dr] &\\
      S_1 \ar[ur]\ar[rr,dash,dashed] && S_2 \ar[ur]\ar[rr,dash,dashed] && S_1 \\
     \ar[r,dash,dashed]& I_1=P_2 \ar[dr]\ar[rr,dash,dashed] && I_2=P_1 \ar[dr] \ar[r,dash,dashed]& {}\\
      S_1 \ar[ur]\ar[rr,dash,dashed] && S_2 \ar[ur]\ar[rr,dash,dashed] && S_1 
    \end{tikzcd}
    
    \caption{Auslander-Reiten quivers of the algebras from Example \ref{ex:1} (top) and Example \ref{ex:3} (bottom).}
    \label{fig:AR_quiver_1}

\bigskip
\bigskip

    \centering
    
    \begin{tikzpicture}
         
         \filldraw[red!20, opacity=0.75] (0,0) -- (0,2) -- (1,3) -- (1,1) -- cycle;
        \draw[blue] (0,0) --(0,2);
        \draw[blue] (0,0) -- (1,1);
        \draw[blue] (0,0) -- (-1,1);

        \draw (-0.05,0) edge (-2.3,0); 
        \draw[dashed] (0.05,0) edge (2.5,0); 
        \draw[fill] (0,0) circle [radius=.04];
        
        \draw[fill, blue] (1,1) circle [radius=.05];
        \node at (1.75,1) {$\scriptstyle{S_1 = \stan{1}}$};
        
        \draw[fill, blue] (-1,1) circle [radius=.05]; 
        \node at (-1.5,1) {$\scriptstyle{S_2}$};
        
        \draw[fill, blue] (0,2) circle [radius=.05];
        \node at (-0.75,2) {$\scriptstyle{P_2=\stan{2}}$};
        
        \draw[fill, red] (1,3) circle [radius=.05];
        \node at (1.5,3) {$\scriptstyle{P_1}$};
        
    \end{tikzpicture}
   \qquad 
    \begin{tikzpicture}
         
         \filldraw[red!20, opacity=0.75] (0,0) -- (0,2) -- (-1,3) -- (-1,1) -- cycle;
        \filldraw[red!20, opacity=0.75] (0,0) -- (0,2) -- (-1,1) -- cycle;
        \draw[blue] (0,0) -- (1,1);
        \draw[blue] (0,0) -- (-1,1);

        \draw (-0.05,0) edge (-2.3,0); 
        \draw[dashed] (0.05,0) edge (2.5,0); 
        \draw[fill] (0,0) circle [radius=.04];
        
        \draw[fill, blue] (-1,1) circle [radius=.05]; 
        \node at (-1.5,1) {$\scriptstyle{S_2}$};
        
        \draw[fill, blue] (1,1) circle [radius=.05];
        \node at (1.75,1) {$\scriptstyle{S_1 = \stan{1}}$};
        
        \draw[fill, red] (0,2) circle [radius=.05];
        \node at (0.5,2) {$\scriptstyle{P_1}$};
        
        \draw[fill, red] (-1,3) circle [radius=.05];
        \node at (-1.75,3) {$\scriptstyle{P_2=\stan{2}}$};
        
    \end{tikzpicture}
    
\bigskip

 \begin{tikzpicture}
         
         \filldraw[red!20, opacity=0.75] (0,0) -- (2,1) -- (4,3) -- (2,2) -- cycle;
         \filldraw[red!20, opacity=0.75] (0,0) -- (2,2) -- (0,4) -- (-2,2) -- cycle;
        \draw[blue] (0,0) --(0,1);
        \draw[blue] (0,0) -- (2,1);
        \draw[blue] (0,0) -- (2,2);
        \draw[blue] (0,0) -- (-2,2);
        \draw[blue] (0,0) -- (-2,1);

        \draw (-0.05,0) edge (-2.3,0); 
        \draw[dashed] (0.05,0) edge (2.5,0); 
        \draw[fill] (0,0) circle [radius=.04];
        
        \draw[fill, blue] (2,1) circle [radius=.05];
        \node at (2.75,1) {$\scriptstyle{S_1 = \stan{1}}$};

        \draw[fill, blue] (0,1) circle [radius=.05];
        \node at (0.5,1) {$\scriptstyle{S_2}$};
        
        \draw[fill, blue] (-2,1) circle [radius=.05]; 
        \node at (-2.5,1) {$\scriptstyle{S_3}$};
        
        \draw[fill, blue] (2,2) circle [radius=.05];
        \node at (2,2.5) {$\scriptstyle{\stan{2}}$};
        
        \draw[fill, red] (4,3) circle [radius=.05];
        \node at (4.5,3) {$\scriptstyle{P_1}$};
        
        \draw[fill, red] (0,4) circle [radius=.05];
        \node at (0,4.5) {$\scriptstyle{P_2}$};

        \draw[fill, blue] (-2,2) circle [radius=.05];
        \node at (-2.75,2) {$\scriptstyle{P_3=\stan{3}}$};
        
    \end{tikzpicture}
    
    \caption{Stability functions for Example \ref{ex:1} (top left), \ref{ex:2} (top right) and (\ref{ex:4}) (bottom). Simple, projective and standard objects are indicated by dots, blue for stable objects and red for unstable. The HN polygons, \ie the convex hulls of the charges of all subobjects, of unstable objects are shaded red. The HN filtration is given by the sequence of vectors of stable objects on the left hand edge of the HN polygon.} 
    \label{fig:stability_function}
\end{figure}

\begin{example}
    \label{ex:2} 
    Consider the quiver (\ref{quiver1}) from the previous example but now with relations $I'=\langle \alpha\beta\rangle$. The algebra $A'=kQ/I'$ is isomorphic to the algebra $A$ in the previous example, but the ordering of the simple objects is reversed. Therefore the Auslander--Reiten quiver is the same  but with the subscripts $1$ and $2$ switched. Now $\stan{2}=P_2$ has $\End{}{\stan{2}} \cong k[x]/x^2$ so $\mod{A'}$ is not highest weight. 
    
    The failure to be highest weight can be detected by noting that:
    \begin{itemize}
        \item the maximal vertex $2$ is internal to the generating relation $\alpha\beta$;
        \item the multiplicity $[\stan{2}:S_2] = 2$;
        \item there is no stability function with $\phi(S_1)<\phi(S_2)$ in which $\stan{2}$ is stable --- see Figure \ref{fig:stability_function}.
    \end{itemize}
\end{example}

The following example illustrates the possibility that the category admits a recollement with factors equivalent to the category of vector spaces, but which is not homological.

\begin{example}
\label{ex:3}
Let $\alg{B}=k Q/J$ where $Q$ is the quiver in (\ref{quiver1}) and $J=\langle \alpha\beta, \beta\alpha \rangle$. Then $\mod{B}$  is equivalent to the category of perverse sheaves on $\C\PP^2$ stratified by $\C\PP^1$ and its complement, see \cite[Example 6.3]{MR0833195}. It has infinite global dimension so is not highest weight. The latter is easily seen using Green--Schroll's criterion as $2$ is a maximal internal vertex of  $\alpha\beta$.

The Auslander--Reiten quiver is depicted in Figure \ref{fig:AR_quiver_1}. The standard and costandard objects are $\stan{1}=S_1=\costan{1}$, $\stan{2}=P_2$ and $\costan{2}=I_2$. Therefore the first numerical condition that $[\stan{1}:S_1] = 1 = [\stan{2}:S_2]$ is met. This corresponds to the fact that the representation category admits a recollement with factors equivalent to the category of vector spaces. However, this recollement is not homological and this is detected by the failure of the second numerical criterion: 
\[
[P_1] \neq [\stan{1}]+[\stan{2}].
\]
Correspondingly, the standard objects are stable for any stability function with $\phi(S_1)< \phi(S_2)$, but the projective $P_1$ has initial Harder--Narasimhan factor $S_2$, which is not standard.
\end{example}

In the next example the representation category is highest weight, but that of the associated monomial algebra is not. This case is therefore not decided by Green-Schroll's criterion.
\begin{example}
\label{ex:4}
Let $C=k Q/I$ where $Q$ is the quiver
\[
\begin{tikzcd}
    1 \arrow[r,bend left,"\alpha"]  & 2 \arrow[l,bend left,"\beta"] \arrow[r,bend left,"\gamma"]& 3 \arrow[l,bend left,"\delta"]
\end{tikzcd}
\]
and $I=\langle \alpha\gamma, \delta\beta, \delta\gamma,  \beta\alpha-\gamma\delta \rangle$. Then $\mod{C}$ is equivalent to the category of perverse sheaves on $\C\PP^2$ stratified by $\C\PP^0 \subset \C\PP^1 \subset \C \PP^2$. This is highest weight by \cite[Thm 3.3.1]{MR1322847} and has global dimension $4$.  The associated monomial algebra --- see \cite[\S 2]{MR4017924} for the construction --- is given by the ideal
$I_\text{mon} = \langle \alpha\beta\alpha,\beta\alpha\beta, \alpha\gamma, \delta\beta, \delta\gamma, \gamma\delta \rangle$. 
Applying Green-Schroll's criterion to this, or simply noting that $kQ/I_\text{mon}$ has infinite global dimension, shows that $kQ/I_\text{mon}$ is not quasi-hereditary. 

The projective, standard, injective and costandard objects of $\mod{C}$ are indicated in the table below. The numerical criteria showing that the representation category is highest weight can be verified from this data.

\medskip
\begin{center}
    
  \begin{tabular}{|l|c|c|c|c|}
    \hline
      & Projective & Standard & Injective & Costandard \\
    \hline
       $i=1$  
       & $\scriptstyle{\begin{matrix}1 \\ \ \ 2 \\1 \end{matrix}}$ 
       & $\scriptstyle{\begin{matrix} 1\\ \\ {} \end{matrix}}$
       & $\scriptstyle{\begin{matrix} 1 \\ \ \  2 \\1 \end{matrix}}$
       & $\scriptstyle{\begin{matrix} \\ \\ 1 \end{matrix}}$
       \\
       \hline
       $i=2$  
       & $\scriptstyle{\begin{matrix} 2 \\ 1\ 3 \\2 \end{matrix}}$ 
       & $\scriptstyle{\begin{matrix} 2 \\1\ \  \\ {} \end{matrix}}$
       & $\scriptstyle{\begin{matrix} 2 \\ 1 \ 3 \\2 \end{matrix}}$
       & $\scriptstyle{\begin{matrix} \\ 1\ \  \\ 2 \end{matrix}}$
       \\
        \hline
       $i=3$  
       & $\scriptstyle{\begin{matrix}  \ \ 3 \\ 2 \end{matrix}}$ 
       & $\scriptstyle{\begin{matrix}  \ \ 3 \\ 2\end{matrix}}$
       & $\scriptstyle{\begin{matrix} 2 \\  \ \ 3  \end{matrix}}$
       & $\scriptstyle{\begin{matrix} 2 \\  \ \ 3 \end{matrix}}$
       \\
       \hline
    \end{tabular}
\end{center}

\medskip

See Figure \ref{fig:stability_function} for a depiction of a stability function in which the standard objects are stable of strictly increasing phase and the Harder--Narasimhan factors of the projective objects are standard.
\end{example}

The final example is one of a properly stratified category which is not highest weight. Here there is a stability function in which the proper standard objects are stable, but no stability function in which the standard objects are so.
\begin{example}
\label{ex: 5}
\cite[Example 2.3]{MR1749881}
Let $D=kQ/I$ where $Q$ is the quiver
\[
\begin{tikzcd}
    1 \arrow[r,bend left,"\alpha"]  & 2 \arrow[l,bend left,"\beta"] 
    \ar[loop, in=330,out=30,looseness=5,"\gamma"]
    \end{tikzcd}
\]
and $I=\langle \alpha\gamma, \beta\alpha, \gamma^2\rangle$. The standard, proper standard and projective objects are as in the table below. The category $\mod{D}$ is properly stratified by inspection, but not highest weight because $P_1$ has no filtration by standard objects.

\medskip
\begin{center}
    
  \begin{tabular}{|l|c|c|c|}
    \hline
      & Projective & Standard & Proper standard \\
    \hline
       $i=1$  
       & $\scriptstyle{\begin{matrix}1 \\ 2 \\1 \end{matrix}}$ 
       & $\scriptstyle{\begin{matrix} 1\\ \\ {} \end{matrix}}$
       & $\scriptstyle{\begin{matrix} 1\\ \\ {} \end{matrix}}$
        \\
       \hline
       $i=2$  
       & $\scriptstyle{\begin{matrix} 2 \\ 1\ 2 \\ \ \ 1 \end{matrix}}$ 
       & $\scriptstyle{\begin{matrix} 2 \\ 1\ 2 \\ \ \ 1 \end{matrix}}$ 
       & $\scriptstyle{\begin{matrix} 2 \\1 \\ {} \end{matrix}}$
       \\
        \hline
    \end{tabular}
\end{center}
\medskip

The standard object $\stan{2}$ is semistable when $\phi(S_1) \leq \phi(S_2)$ but never stable because it is not a brick. Applying Theorem~\ref{thm:new hwc characterisations} this confirms $\mod{D}$ is not highest weight. The proper standard $\pstan{2}$ is stable whenever $\phi(S_1)<\phi(S_2)$, and in that case the Harder--Narasimhan filtrations of the projective objects coincide with their filtrations by proper standard objects as predicted by Theorem~\ref{thm:properly stratified}.

\end{example}

\bibliographystyle{alpha}
\bibliography{References.bib}

\end{document}